\documentclass[12pt]{amsart}
\RequirePackage{amsmath,amsthm,amssymb,amsfonts}
\title[Boundary Smoothness]{
Boundary Smoothness of Analytic Functions}
\author{Anthony G. O'Farrell}
\email{admin@maths.nuim.ie}
\address{Mathematics and Statistics, NUI, Maynooth, Co. Kildare, Ireland}
\date{\today. Misprints in published 2013 version corrected.}
\dedicatory{Dedicated to Lawrence Zalcman\\ on the occasion of his $70$th birthday}
\keywords{Analytic function, boundary, Lipschitz condition, point derivation}
\subjclass[2010]{30E25, 30H99, 46J10}


\newtheorem{theorem}{Theorem}[section]
\newtheorem{corollary}[theorem]{Corollary}
\newtheorem{lemma}[theorem]{Lemma}

\newcommand{\B}{\mathbb{B}}
\newcommand{\C}{\mathbb{C}}
\newcommand{\N}{\mathbb{N}}

\newcommand{\bdy}{\textup{bdy}}
\newcommand{\clos}{\textup{clos}}

\newcommand{\dist}{\textup{dist}}

\newcommand{\Lip}{\textup{Lip}}
\newcommand{\lip}{\textup{lip}}
\newcommand{\oh}{\textup{o}}
\newcommand{\Oh}{\textup{O}}


\def\sideremark#1{\ifvmode\leavevmode\fi\vadjust{
\vbox to0pt{\hbox to 0pt{\hskip\hsize\hskip1em
\vbox{\hsize1cm\tiny\raggedright\pretolerance10000
\noindent #1\hfill}\hss}\vbox to8pt{\vfil}\vss}}}




\bibliographystyle{amsplain}


\begin{document}
\begin{abstract}
We consider the behaviour of holomorphic functions 
on a bounded open subset of the plane,
satisfying a Lipschitz condition with exponent
$\alpha$, with $0<\alpha<1$, in the vicinity of
an exceptional boundary point where all such functions exhibit some
kind of smoothness. Specifically, we consider the
relation between the abstract idea of a bounded
point derivation on the algebra of
such functions and the classical complex derivative
evaluated as a limit of difference quotients.
We obtain a result which applies, for example,
when the open set admits an interior cone
at the special boundary point.
\end{abstract}

\maketitle

\section{Introduction}
Let $U$ be a nonempty open subset of $\C$, and let $f:U\to\C$
be holomorphic on $U$.  Suppose $0<\alpha<1$, and
$f$ satisfies a H\"older, or Lipschitz condition
with exponent $\alpha$ on $U$: i.e. there exists $\kappa>0$
such that
\begin{equation}\label{E:1}
 | f(z)-f(w)|\le \kappa|z-w|^\alpha,\ \forall z,w\in U.
\end{equation}
  Then $f$ has a unique 
continuous extension to $Y=\clos(U)$.
This extension also satisfies the Lipschitz condition
with exponent $\alpha$ on $Y$, with the same constant $\kappa$.

Let $b$ belong to the boundary of $U$. It may happen
that all such $f$ are in some sense smoother at $b$ than a
typical H\"older-continuous complex-valued function.
That is, the additional assumption of analyticity 
on $U$ may force additional smoothness at $b$.

The strongest possible smoothness that might occur
would be that all such $f$ are actually holomorphic on a
certain neighbourhood of $b$.  In that case, $b$
is usually referred to as a {\em removable singularity
for $\Lip\alpha$ holomorphic functions on $U$}.  This
phenomenon was first investigated by Dolzhenko,
who showed \cite{Dol} that $b$ is a removable singularity
of this type if and only if there exists $r>0$
such that $\B(b,r)\sim U$ has zero $(1+\alpha)$-dimensional
Hausforff measure, where $\B(b,r)$ denotes the closed disk
having center $b$ and radius $r$.  (It seems appropriate
to mention here that author's
interest in removable singularities, and in this whole
area, was first aroused by Larry Zalcman's Monthly paper
\cite{Z}.)

It may also happen that more limited smoothness 
occurs at a boundary point $b$ that is not removable.
In \cite{LO} Lord and the author considered  the notion of a
{\em continuous point derivation}
at a boundary point, and gave a necessary and
sufficient condition for the existence
of such a nonzero derivation.  The concept of continuous point
derivation comes from the theory of commutative
Banach algebras. If $A$ is a commutative Banach algebra
with character space (maximal ideal space)
$M(A)$, and $\phi\in M(A)$, then a continuous point
derivation on $A$ at $\phi$ is a continuous linear
functional $\partial:A\to\C$ such that the Leibniz
rule
$$ \partial(fg) = \phi(f)\partial(g)+
\partial(f)\phi(g) $$
holds for all $f,g\in A$.  In the present case,
we considered the algebra
$A=A_\alpha(U)$ of all holomorphic functions $f$ on $U$
that belong to the \lq\lq little Lipschitz class",
i.e. are not just Lip$\alpha$ functions, but
have the stronger property that
for each $\epsilon>0$ there exists $\delta>0$
such that for all $z,w\in U$,
$$ |z-w|<\delta \implies 
|f(z)-f(w)|\le \epsilon|z-w|^\alpha.$$
The norm on $A$ is given by
$$ \|f\|_A = \sup_U|f| + \kappa(f), $$
where $\kappa(f)$ is the least value
that works in the inequality \eqref{E:1}. 
The characters on this $A$ are precisely the
evaluations $f\mapsto f(a)$, for $a\in Y$
(as follows from a result of Sherbert \cite{S}),
and we identify the point $a$ with the corresponding character.
At points $a\in U$, the map $\partial: f\mapsto
f'(a)$ is a nonzero continuous point derivation on $A$
at $a$.
We gave a necessary and sufficient condition
in order that there exist a nonzero continuous
point derivation on $A$ at a given $b\in\bdy U$.
This condition involved a set function known as
lower $(1+\alpha)$-dimensional Hausdorff content,
and denoted $M_*^{1+\alpha}$. The existence
of the derivation is equivalent to the convergence
of a Wiener-type series
$$ \sum_{n=0}^\infty 4^n M_*^{1+\alpha}(A_n(b)\sim U),$$
where $A_n(b)$ denotes the annulus consisting of those $z$ with
$$ \frac1{2^{n+1}} \le |z-b| \le \frac1{2^n}.$$
For the definition of the content we refer the reader to
\cite{LO}.  We shall not use the content in the present
paper, but we note that the above condition is
explicit and practical, 
and allows one to determine by computation whether or not
there exists such a point derivation at a given
boundary point.

The paper \cite{LO} has similar characterizations of the
existence of higher-order continuous point derivations
on $A$.  It also has results about the \lq\lq big Lip" algebra
of all $\Lip\alpha$ holomorphic functions on $U$.
In the latter case, the character space is still 
$Y$, but the results are about weak-star continuous
derivations --- the big Lip algebra always has
nonzero continuous point derivations at every point,
but only the weak-star continuous ones are of any
interest in connection with analytic function theory.

(For the avoidance of confusion, we note that what is
here denoted $A_\alpha(U)$ was denoted $a(U)$ in \cite{LO},
while the notation $A(U)$ was there used for the big Lip
version.)

These results are rather abstract, and the purpose of the 
present paper is to relate them to concrete classical
ideas of derivative.  We are going to confine attention
to the simplest case: the little lip algebra $A$, and
(first-order) continuous point derivations.
The question we address is the following:

\medskip
Suppose the bounded open set $U$, a boundary point $b$,
and $\alpha\in(0,1)$ are given, and suppose there
exists a nonzero continuous point derivation $\partial$ on 
$A=A_\alpha(U)$ at $b$.  Can we evaluate $\partial f$ by a formula
$$ \partial f = c\lim_{n\uparrow\infty}
\frac{f(z_n)-f(b)}{z_n-b},$$
valid for every $f\in A$?   (Here, as before, $f(b)$
denotes the value at $b$ of the unique continuous
extension of $f$ to $\bdy U$.)

\medskip
We remark that a continuous point derivation at $b$
is uniquely determined by its value
$c=\partial z$ at the identity function $z\mapsto z$.
This follows using elementary algebra from the
fact \cite[Lemma 1.1]{LO}
that the set of functions
$f\in A$ that are holomorphic on a neighbourhood of $b$
is a dense subalgebra of $A$.  We say that
$\partial$ is {\em normalised } if $\partial z=1$.
If $\partial$ is any nonzero derivation, then
$\partial/c$ is normalised.

In the interest of further simplicity, we confine attention
to the situation in which the boundary point
is nicely accessible from $U$.  We say that
$U$ {\em has an interior cone }at the boundary point $b$ if 
there is a segment $J$ ending at $b$ and a constant 
$t>0$ such that 
$$\dist(z,\C\sim U)\ge t|z-b|,\ \forall z\in J.$$
We call such a segment $J$ a {\em nontangential ray}
to $b$.

We say that a sequence $(z_n)_n$ of points of $U$
{\em converges non-tangentia\-lly to }$b$, written
$z_n\to_\textup{nt} b$, if there exists a constant
$t>0$ such that 
$$\dist(z_n,\C\sim U)\ge t|z_n-b|,\ \forall z\in J.$$

Obviously, 
if $U$ has an interior cone at $b$, then any sequence
converging to $b$ along a nontangential ray $J$ is converging
nontangentially.  However, the existence of a 
sequence converging nontangentially does not
imply that $b$ lies on the boundary of a single
connected component of $U$.  Without going into details
about Hausdorff content, we remark that for a closed ball
$M_*^\beta(\B(a,r))=r^{\beta}$, that for a line segment $J$,
$M_*^\beta(J)=0$ if $\beta>1$, and also that $M_*^\beta$
is countably subadditive. As a result, it is easy to construct
many examples $U$ in which the complement of $U$ is a countable union
of  closed balls, line segments and the singleton $\{b\}$, and in which
$A$ has a continuous point derivation at $b$.  
All you have to do is make sure that the sum of
the $(1+\alpha)$-th powers of
the radii of all the closed balls that meet
$A_n(b)$ is no greater than $s_n/4^n$, where $\sum_ns_n<+\infty$.

Our main result is the following:

\begin{theorem}
Let $0<\alpha<1$, let $U\subset\C$ be a bounded open set,
let $b\in\bdy U$, $z_n\in U$, $z_n\to_\textup{nt} b$.
Suppose $A=A_\alpha(U)$ admits a nonzero continuous point derivation
at $b$.  Let $\partial$ be the normalised
derivation at $b$. 
Then  for each $f\in A$, we have
$$ \frac{f(z_n)-f(b)}{z_n-b} \to \partial f.$$
\end{theorem}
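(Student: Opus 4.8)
The plan is to reduce the assertion to a uniform bound on the difference–quotient functionals and then to obtain that bound by a multiscale (Vitushkin-type) analysis near $b$ that reproduces the Wiener series of \cite{LO}. For each $n$ put $L_nf=(f(z_n)-f(b))/(z_n-b)$, a bounded linear functional on $A$. By \cite[Lemma 1.1]{LO} the functions in $A$ that are holomorphic on some neighbourhood of $b$ form a dense subalgebra $R$, and for $g\in R$ one has $\partial g=g'(b)$ and, since $z_n\to b$, $L_ng\to g'(b)=\partial g$. As $\partial$ is bounded, a three–epsilon argument then yields $L_nf\to\partial f$ for every $f\in A$ once we show that $\sup_n\|L_n\|_{A^*}<\infty$; and here only large $n$ are at issue, since $|L_nf|\le 2\|f\|_\infty/|z_n-b|\le C\|f\|_A$ whenever $|z_n-b|$ is bounded below. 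Replacing $f$ by $f-f(b)$ we may also assume $f(b)=0$, so that $|f(z)|\le\kappa(f)|z-b|^\alpha$.

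To bound $\|L_n\|$ we localise near $b$. Fix a smooth $\chi$ equal to $1$ on $\B(b,\frac14)$ and supported in $\B(b,\frac12)$, and a subordinate dyadic partition $\chi=\sum_{m\ge0}\phi_m$ with $\phi_m$ supported in $\{2^{-m-2}\le|z-b|\le2^{-m}\}$ and $|\bar\partial\phi_m|\le C2^m$. Setting $f_m(z)=\phi_m(z)f(z)-\frac1\pi\int\frac{\bar\partial\phi_m(\zeta)f(\zeta)}{z-\zeta}\,dA(\zeta)$ (the Vitushkin localisation of $f$ by $\phi_m$), one has $f=g_0+\sum_{m\ge0}f_m$, where $g_0(z)=(1-\chi(z))f(z)+\frac1\pi\int\frac{\bar\partial\chi(\zeta)f(\zeta)}{z-\zeta}\,dA(\zeta)$ is holomorphic on the \emph{fixed} disc $\B(b,\frac14)$ with $\|g_0\|_A\le C\|f\|_A$, while each $f_m$ lies in $A$ and is holomorphic off $(\C\sim U)\cap\{\,\phi_m\neq0\,\}$. (That $g_0$ and the $f_m$ belong to the little-Lipschitz algebra rests on the standard fact that the Cauchy transform of a bounded, compactly supported function lies in $\lip\alpha$ for every $\alpha<1$.) Cauchy's estimate on the fixed disc gives $|L_ng_0|\le C\sup_{\B(b,1/8)}|g_0'|\le C\|g_0\|_\infty\le C\|f\|_A$ for large $n$, so it suffices to bound $\sum_{m\ge0}|L_nf_m|$ by $C\|f\|_A$.

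The crux is the scale-by-scale estimate. Choosing the localisation in Vitushkin's way — approximating $f_m$ by a rational function with poles in $A_m(b)\sim U$, the approximation error being governed by an optimal covering of $A_m(b)\sim U$ by discs — one should obtain, for $z_n$ with $\dist(z_n,\C\sim U)\ge t|z_n-b|$,
\[
 |L_nf_m|\ \le\ C\,4^m\,M_*^{1+\alpha}\bigl(A_m(b)\sim U\bigr)\,\kappa(f).
\]
The weight $4^m$ arises from the divided–difference identity for the Cauchy kernel,
\[
 \frac1{(z_n-\zeta)(b-\zeta)}-\frac1{(b-\zeta)^2}=\frac{b-z_n}{(z_n-\zeta)(b-\zeta)^2},\qquad |b-\zeta|\asymp2^{-m}\ \text{on}\ A_m(b),
\]
this being precisely the kernel that appears in $L_nf_m$ (and in $\partial f_m=\frac1\pi\int\frac{\bar\partial\phi_m(\zeta)f(\zeta)}{(b-\zeta)^2}\,dA(\zeta)$); the factor $M_*^{1+\alpha}(A_m(b)\sim U)$ then emerges on estimating the resulting Cauchy potential by means of $|f(\zeta)|\le\kappa(f)|\zeta-b|^\alpha$ together with an optimisation over disc coverings of the obstacle. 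Summing over $m$ gives $\sum_m|L_nf_m|\le C\kappa(f)\sum_m4^mM_*^{1+\alpha}(A_m(b)\sim U)\le C\|f\|_A$, the last sum being finite because $A$ admits a continuous point derivation at $b$ — equivalently, by \cite{LO}, because that very Wiener series converges. Together with the previous step this gives $\sup_n\|L_n\|<\infty$, as required; these are, essentially, the estimates underlying the construction of $\partial$ in \cite{LO}.

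The main obstacle is this per-scale estimate, and within it the scale $m$ for which $z_n$ lies in the very annulus $A_m(b)$ bearing the singularities of $f_m$, where the factor $(z_n-\zeta)^{-1}$ is not automatically controlled. Here the nontangentiality hypothesis is decisive: convergence of the Wiener series forces $4^mM_*^{1+\alpha}(A_m(b)\sim U)\to0$, so $A_m(b)\sim U$ is, for large $m$, far thinner than the annulus; hence for large $n$ the disc of radius $t|z_n-b|$ that nontangential access places inside $U$ keeps $z_n$ well away from $A_m(b)\sim U$, and from the poles of the approximating rational functions, at every scale, including the active one, and the geometric definition of $M_*^{1+\alpha}$ can then be brought to bear on the relevant potential. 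Carrying this out carefully, together with the routine verification of the localisation estimates, completes the proof.
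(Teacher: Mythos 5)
Your overall reduction is sound and in fact coincides with the paper's: by the density in $A$ of the functions holomorphic on a neighbourhood of $b$ (\cite[Lemma 1.1]{LO}) it suffices to prove $\sup_n\|L_n\|_{A^*}<\infty$, and your observations that $\partial g=g'(b)$ and $L_ng\to g'(b)$ for such $g$ are correct. The gap is exactly at the step you yourself flag as the crux: the per-scale bound $|L_nf_m|\le C\,4^m M_*^{1+\alpha}(A_m(b)\sim U)\,\kappa(f)$ is asserted (``one should obtain''), not proved, and the mechanism you sketch cannot deliver it as stated. Estimating the Cauchy potential of $\bar\partial\phi_m\cdot f$ via $|f(\zeta)|\le\kappa(f)|\zeta-b|^\alpha$ and an optimisation over disc coverings of $A_m(b)\sim U$ is the classical Vitushkin coefficient estimate, and it produces the ordinary (upper) $(1+\alpha)$-dimensional Hausdorff content, not the \emph{lower} content $M_*^{1+\alpha}$. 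The hypothesis, through \cite{LO}, gives convergence of the Wiener series only for the lower content, which can be strictly smaller than the ordinary content --- that gap is precisely what separates the little-lip from the big-Lip theory in \cite{LO} --- so the series you sum over $m$ may diverge. To bring $M_*^{1+\alpha}$ into a per-scale estimate that is uniform over the unit ball of $A$ you would need, at least, the comparability of the $\lip\alpha$ analytic capacity with lower content together with Melnikov/Vitushkin-type pointwise estimates at the active scale; this is the ``constructive'' programme the introduction explicitly leaves for another day, and your closing ``carrying this out carefully'' defers it rather than performs it. (A smaller point: your explanation of the role of nontangentiality is off --- $\dist(z_n,\C\sim U)\ge t|z_n-b|$ keeps $z_n$ away from $A_m(b)\sim U$ at every scale by definition, independently of any thinness forced by the Wiener series; the genuine difficulty at the active scale is uniformity of the constant in the content estimate, not separation from the obstacle.)

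For contrast, the paper obtains the uniform bound without any contents: it represents $\partial$ by a measure $\mu$ on $X\times X$, forms the distribution $T_1$ of \eqref{E:2} together with $T_0=(z-b)\cdot T_1$, $T=-\pi(z-b)^2\cdot T_1$ and the representing distributions $R_a$, and the growth estimates for the Cauchy transforms $\widehat{g\cdot T_1}$ and $\hat T_1$ at nontangential points (Lemmas \ref{L:2} and \ref{L:3}) then give $|D_{z_n}(f)|\le c\,\kappa(f)$ directly. Your route is the constructive alternative the author mentions, and it may well be viable, but as written its central estimate remains a conjecture, so the proposal does not constitute a proof.
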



\begin{corollary}
Under the same hypotheses,
if $U$ has an interior cone at $b$, and $J$ is a nontangential
ray to $b$, then
 $$
\partial f = \lim_{z\to b, z\in J}
\frac{f(z)-f(b)}{z-b}, \ \forall f\in A.$$
\end{corollary}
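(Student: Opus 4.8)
The plan is to reduce the general statement to the case of functions holomorphic near $b$, where the difference quotient manifestly computes the classical derivative, and then to pass to the limit using the continuity of $\partial$ together with the fact that the difference-quotient functionals are uniformly bounded on $A$. Concretely, for $z \in U$ with $z \neq b$ define the linear functional $L_z : A \to \C$ by $L_z f = \dfrac{f(z) - f(b)}{z - b}$. The first step is to establish a uniform bound $\sup_n \|L_{z_n}\|_{A^*} < \infty$; here is where nontangential convergence is used. Indeed, write $\Delta(z) = \dist(z, \C \sim U) \geq t|z - b|$. For $f \in A$ one estimates $|f(z) - f(b)|$ by first moving from $b$ to a point on the boundary (trivial, as $f(b)$ is the continuous extension value) — better, one uses the standard Hardy--Littlewood-type fact that a $\Lip\alpha$ holomorphic function on $U$ has $|f'(z)| \le C\kappa(f)\,\Delta(z)^{\alpha - 1}$, and integrates $f'$ along a path from $z$ into a region where $|\zeta - b|$ is comparable to a fixed constant, keeping the path nontangential so that $\Delta(\zeta) \gtrsim |\zeta - b|$ throughout. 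Carrying out that integration gives $|f(z) - f(w)| \le C'\kappa(f)\,|z - b|^{\alpha}$ for a suitable fixed $w$, hence $|L_{z_n} f| \le C''\|f\|_A$ with $C''$ depending only on $t$ and $\alpha$, not on $n$.

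The second step handles the dense subalgebra. By \cite[Lemma 1.1]{LO}, the set $R$ of functions in $A$ that are holomorphic on a neighbourhood of $b$ is dense in $A$. For such an $f$, the limit $\lim_n L_{z_n} f$ exists and equals $f'(b)$, the ordinary complex derivative, simply because $f$ is holomorphic at $b$ and $z_n \to b$. It remains to identify $f'(b)$ with $\partial f$ for $f \in R$. This is the content of the remark in the excerpt: a continuous point derivation at $b$ is determined by its value $c = \partial z$, and on functions holomorphic near $b$ the Leibniz rule forces $\partial f = c\, f'(b)$ (first for polynomials in $z$ by induction, then for rational functions with poles off $\clos U$, then for general $f \in R$ by a local power-series / uniform approximation argument on a small disk about $b$). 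Since $\partial$ is normalised, $c = 1$, so $\partial f = f'(b) = \lim_n L_{z_n} f$ for every $f \in R$.

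The final step is a routine $3\epsilon$ argument. Given $f \in A$ and $\epsilon > 0$, choose $g \in R$ with $\|f - g\|_A < \epsilon$. Then $|L_{z_n} f - \partial f| \le |L_{z_n}(f - g)| + |L_{z_n} g - \partial g| + |\partial(g - f)| \le C''\epsilon + |L_{z_n} g - \partial g| + \|\partial\|\,\epsilon$, and the middle term tends to $0$ by Step 2. Hence $\limsup_n |L_{z_n} f - \partial f| \le (C'' + \|\partial\|)\epsilon$, and letting $\epsilon \downarrow 0$ gives $L_{z_n} f \to \partial f$, as claimed.

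I expect the main obstacle to be Step 1, the uniform boundedness of the $L_{z_n}$. The naive bound $|L_{z_n} f| \le \kappa(f)\,|z_n - b|^{\alpha - 1}$ blows up, so one genuinely needs to exploit analyticity: the derivative estimate $|f'(z)| \lesssim \kappa(f)\,\Delta(z)^{\alpha-1}$ combined with integration along a nontangential path, where the comparability $\Delta(\zeta) \asymp |\zeta - b|$ along the path is exactly what converts the integral $\int |\zeta - b|^{\alpha - 1}\,|d\zeta|$ into something of size $O(|z_n - b|^{\alpha})$. Making the path from $z_n$ to a fixed reference point both nontangential and of length comparable to $|z_n - b|$ requires a little care but is straightforward once one notes that the $z_n$ all lie in a fixed cone-like region; alternatively one can chain together dyadic steps, at each of which $\Delta$ doubles and the $\Lip\alpha$ bound on $f$ controls the increment. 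Everything else is soft functional analysis.
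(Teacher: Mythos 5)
There is a genuine gap, and it sits exactly where you predicted: Step 1. The uniform bound $\sup_n\|L_{z_n}\|_{A^*}<\infty$ cannot be derived from nontangential approach plus the Hardy--Littlewood gradient estimate alone, because your sketch never uses the hypothesis that a bounded point derivation exists at $b$ --- and without that hypothesis the claim is false. Concretely, the estimate $|f'(\zeta)|\le C\kappa(f)\,\dist(\zeta,\C\sim U)^{\alpha-1}$ together with $\dist(\zeta,\C\sim U)\ge t|\zeta-b|$ along the path gives, upon integration toward $b$, only $|f(z_n)-f(b)|\le C'\kappa(f)|z_n-b|^{\alpha}$ (the trivial Lipschitz bound up to a constant), and integrating outward to a fixed reference point $w$ gives a bound of size $O(\kappa(f))$, not $O(|z_n-b|^{\alpha})$; in neither case does dividing by $|z_n-b|$ yield anything bounded, since the exponent never improves from $\alpha$ to $1$. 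Moreover the claim cannot be repaired by a cleverer path: if the $L_{z_n}$ were uniformly bounded and converged on the dense subalgebra of functions holomorphic near $b$, the limit functional would itself be a nonzero bounded point derivation at $b$, so your Step 1 would prove the existence of such derivations at every nontangentially accessible boundary point. This fails, e.g., for $U$ the open unit disk and $b=1$ (the Wiener-type series of the paper diverges there), where $f(z)=(1-z)^{\beta}$ with $\alpha<\beta<1$ lies in $A_\alpha(U)$ and its radial difference quotients at $b$ blow up. The improvement of the exponent is exactly the hard content of the paper's Theorem, and it is obtained there by representing $\partial$ via a measure $\mu$ on $X\times X$, forming the distributions $T_1$, $T_0=(z-b)\cdot T_1$ and $T=-\pi(z-b)^2\cdot T_1$, and using the Cauchy-transform estimates of Lemmas \ref{L:2} and \ref{L:3} together with $\hat T(z_n)\to1$ to bound the error functionals $D_{z_n}$ by $c\kappa(f)$.

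Steps 2 and 3 are fine (the identification $\partial f=f'(b)$ on the dense subalgebra and the $3\epsilon$ argument are standard, and are essentially how the paper passes from $\mathcal A$ to $A$), but note also that the corollary itself does not require re-proving anything: since convergence to $b$ along a nontangential ray $J$ is nontangential convergence, the Corollary follows from the Theorem in one line by applying it to arbitrary sequences $z_n\in J$ with $z_n\to b$. By trying to bypass the Theorem you have re-opened the whole problem, and the elementary route you propose for the key uniform bound does not close it.
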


We would expect that these results could be extended
to higher order derivations and to weak-star
continuous derivations on the big Lip algebra.

In broad outline, the methods we shall use are
adapted from those used to prove a similar result
about bounded analytic functions in \cite{O}. In that paper,
we used duality ideas from functional analysis, the Riesz
Representation Theorem, and the Cauchy transform.
In order to transfer the methods to the Lipschitz algebra
one has to overcome various technical problems.

The methods we use in this paper, using duality and 
abstractions from functional analysis, are termed
nonconstructive, when contrasted with explicit methods
that involve direct use of the Cauchy integral formula
applied to individual functions. It should also be
possible to approach the proof in a constructive spirit.
This would involve the explicit use of Hausdorff
contents, and it is to some extent a matter of taste
(perhaps influenced by familiarity with various
techniques) which might be regarded as preferable.
We leave the constructive approach for another
day.  

As will be seen, we extend here the arsenal of techniques
for dealing with spaces of holomorphic functions in
Lipschitz classes. We expect that these techniques will
prove useful in dealing with other problems in the same area,
such as the behaviour of functions near a special
boundary point which may not be accessible by nontangential
approach, and various approximation problems.. 

Throughout the paper, $0<\alpha<1$, $U\subset\C$ is a bounded
open set, $Y=\clos(U)$, and $b\in X=\bdy U$.

\section{Extensions and Distributions}
\subsection{}
We have already remarked that the elements of $A=A_\alpha(U)$ 
extend uniquely to continuous functions on $Y=\clos(U)$. It is
in fact obvious that this extension imbeds $A$ isometrically
as a closed subalgebra of $\lip(\alpha,Y)$. 

We are interested in point derivations, and point derivations
annihilate the constant functions (because
$\partial1= \partial(1^2) = 2\cdot1\cdot\partial1$),
so we are really more interested in the quotient
space $A/\C$ modulo constants, and the \lq pure'
Lipschitz seminorm $\|f\|'=\kappa(f)$ from \eqref{E:1}.
We note that the extension to $Y$ also preserves
this pure seminorm.

Our first significant point is that the restriction
of the extension to the boundary $X$ is also isometric
on both the norm and the pure seminorm.
That the sup norm is preserved is the classical
maximum principle.  But we also have:

\begin{lemma}
Let $f\in\Lip(\alpha,Y)$ be holomorphic on $U$.
Then
$$
\sup\left\{
\frac{|f(z)-f(w)|}{|z-w|^\alpha}:
z,w\in Y
\right\}
=
\sup\left\{
\frac{|f(z)-f(w)|}{|z-w|^\alpha}:
z,w\in X
\right\}
$$
\end{lemma}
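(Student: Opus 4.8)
The plan is to prove the nontrivial inequality, $\kappa_Y\le\kappa_X$, where I write $\kappa_Y$ and $\kappa_X$ for the supremum taken over $Y$ and over $X$ respectively; the reverse inequality is immediate from $X\su Y$. Equivalently, I want $|f(z)-f(w)|\le\kappa_X|z-w|^\alpha$ for all $z,w\in Y$, and since every point of $Y=\clos(U)$ is a limit of points of $U$ while $f$ is continuous on $Y$, it suffices to prove this for $z,w\in U$ and then pass to the closure. The engine is the maximum principle for subharmonic functions on the bounded open set $U$. Fixing the second point $w$, I would work with
$$ u_w(z)=\log|f(z)-f(w)|-\alpha\log|z-w| .$$
Since $f(\cdot)-f(w)$ is holomorphic on $U$, the term $\log|f(\cdot)-f(w)|$ is subharmonic on $U$, and $\log|z-w|$ is harmonic on $\C\sim\{w\}$. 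The hypothesis $f\in\Lip(\alpha,Y)$ gives $u_w(z)=\log\bigl(|f(z)-f(w)|/|z-w|^\alpha\bigr)\le\log\kappa_Y$, so $u_w$ is bounded above. (One may assume $f$ is not constant, else there is nothing to prove.) The argument then runs in two stages.

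In the first stage I take $w\in X$, so that $w\notin U$ and $u_w$ is subharmonic on all of $U$. At a boundary point $\zeta\in X$ with $\zeta\ne w$, continuity of the extension gives $\limsup_{z\to\zeta}u_w(z)=\log\bigl(|f(\zeta)-f(w)|/|\zeta-w|^\alpha\bigr)\le\log\kappa_X$, straight from the definition of $\kappa_X$, because $\zeta$ and $w$ both lie in $X$. The one boundary point this does not cover is $w$ itself, and this is the only delicate point in the whole proof. I would dispose of it by recalling that a single point is polar and hence a removable set for the maximum principle applied to functions bounded above; alternatively, and more elementarily, for small $\epsilon>0$ the function $u_w+\epsilon\log|z-w|$ is still subharmonic and bounded above on $U$, its boundary $\limsup$ at $w$ is $-\infty$, and at the remaining points of $X$ it is at most $\log\kappa_X$ up to an $\epsilon$-dependent constant coming from $\diam(Y)$, so the ordinary maximum principle applies and letting $\epsilon\downarrow0$ gives $u_w\le\log\kappa_X$ on $U$. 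Either way, $|f(z)-f(w)|\le\kappa_X|z-w|^\alpha$ for all $z\in U$ and all $w\in X$.

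In the second stage I take $w\in U$, so $u_w$ is subharmonic on $U\sim\{w\}$. Since $f$ is holomorphic at $w$ one has $f(z)-f(w)=\Oh(|z-w|)$ as $z\to w$, hence $u_w(z)\to-\infty$; so there is no trouble at $w$ and the only boundary behaviour that matters is at points of $X$. There $\limsup_{z\to\zeta}u_w(z)=\log\bigl(|f(\zeta)-f(w)|/|\zeta-w|^\alpha\bigr)\le\log\kappa_X$, now by the first stage with the two points in the opposite roles ($\zeta\in X$, $w\in U$). The maximum principle on the bounded open set $U\sim\{w\}$, whose boundary is $X\cup\{w\}$, then gives $u_w\le\log\kappa_X$, i.e. $|f(z)-f(w)|\le\kappa_X|z-w|^\alpha$ for all $z,w\in U$, and passing to the closure finishes the proof. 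The substantive obstacle is the exceptional boundary point $w$ in the first stage; once one sees that a single point is negligible for the maximum principle, the rest is a two-step bootstrap, first lifting the estimate from $X\times X$ to $X\times U$ and then from $X\times U$ to $U\times U$.
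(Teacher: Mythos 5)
Your proof is correct, but it takes a genuinely different route from the paper. The paper fixes the pair $(z_0,w_0)\in Y\times Y$ at which the (continuous, hence attained) supremum occurs, lifts to the Riemann surface of $\log(z-w_0)$ so that $(z-w_0)^\alpha$ becomes a single-valued holomorphic function, and applies the maximum modulus principle to the holomorphic quotient $\bigl(f(p(Z))-f(w_0)\bigr)/(Z-w_0)^\alpha$ to push first $z_0$ and then $w_0$ into $X$. You instead take logarithms: $u_w(z)=\log|f(z)-f(w)|-\alpha\log|z-w|$ is subharmonic where it needs to be, and you run a two-stage bootstrap (first $w\in X$, giving the bound on $X\times U$; then $w\in U$, giving it on $U\times U$), with the only delicate point being the single exceptional boundary point $w$ in stage one, which you correctly dispose of either by polarity or by the $\epsilon\log|z-w|$ perturbation; stage two has no such issue since $f(z)-f(w)=\Oh(|z-w|)$ with $\alpha<1$ forces $u_w\to-\infty$ at $w$. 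Your approach buys elementarity: it avoids the covering surface and multivalued powers entirely, needs no compactness/attainment argument, and rests only on the classical maximum principle for subharmonic functions plus negligibility of a point; the paper's approach buys brevity, handling both coordinates symmetrically in one stroke at the cost of the Riemann-surface bookkeeping. One cosmetic point worth a sentence in a write-up: if $\kappa_X=0$ then $f$ is constant on each component (boundaries of components of $U$ lie in $X$), so the degenerate case you wave at is indeed trivial, and on components where $f\equiv f(w)$ the function $u_w\equiv-\infty$ is not literally subharmonic but the desired inequality holds vacuously.
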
 
 
\begin{proof}
For each fixed $w_0\in Y$, 
Let $\Sigma(w_0)$ be the Riemann surface
of $\log(z-w_0)$, and let $p:\Sigma\to\C\sim\{w_0\}$
be the covering map, so that $\log(Z-w_0)$
is a well-defined holomorphic function 
on $\Sigma(w_0)$, with real part
$\log|p(Z)-w_0|$ and imaginary part
$\arg(Z-w_0)$.
Then $(Z-w_0)^\alpha$, interpreted as 
$$\exp(\alpha\log|p(Z)-w_0|+i\alpha\arg(Z-w_0)),$$
is also holomorphic on $\Sigma(w_0)$, and
$$
g(Z):=\frac{f(p(Z))-f(w_0)}{(Z-w_0)^\alpha}
$$
is a well-defined function on $\Sigma(w_0)\cap p^{-1}(Y)$, 
with absolute value that depends only on 
the projection $p(Z)\in\C$.  It is holomorphic on
$p^{-1}(U)$.  

Since 
$\displaystyle\frac{|f(z)-f(w)|}{|z-w|^\alpha}$
is continuous on $Y\times Y$, it attains its supremum,
say $m$,
at some point $(z_0,w_0)\in Y\times Y$.  If
$z_0\not\in X$, then we have a contradiction to the maximum
principle unless $|f(p(Z))-f(w_0)|$ is identically
equal to $m|p(Z)-w_0|^\alpha$ on the connected
component of each preimage of $z_0$ in $p^{-1}(U)$, and
hence at some point of $p^{-1}(X)$. Thus
we may assume $z_0\in X$.  Similarly, we may assume
$w_0\in X$. 
\end{proof}

As a result, we may regard $A/\C$ as a subspace
of $\lip(\alpha,X)/\C$.   Thus, by the Hahn-Banach Theorem,
each continuous linear functional $T\in A^*$
that annihilates the constants has an isometric
extension to the whole of $\lip(\alpha,X)/\C$,
hence (by a standard method) may be
represented by a Borel-regular measure
$\mu$ on $X\times X$ having no mass on the diagonal
and total mass equal to $\|T\|'$ (the dual
norm on $A^*$ to $\|\dot\|'$), via a formula
$$ Tf = \int_{X\times X} 
\frac{f(z)-f(w)}{(|z-w|^\alpha}
\,d\mu(z,w),\ \forall f\in A.$$

In particular, if we assume that $\partial$
is a continuous point derivation on $A$,
then it has a representation of this kind.

\subsection{Extensions}
Let $\lip\alpha$ denote, for short, the global space
$\lip(\alpha,\C)$ of bounded $\lip\alpha$
functions on $\C$.  Each $f\in A$ may be extended
(in many different ways) to an element of 
$\lip\alpha$, without increasing its pure
norm $\|f\|'$ or supremum. (In fact it is
not difficult to check that if $\omega(r)$
is any concave upper envelope for the modulus
of continuity of $f$, then $f$ may be extended
so that its modulus of continuity
remains bounded by $\omega$. For instance, 
this may be seen by applying the method used
to prove Kirszbraun's Theorem
in \cite{Fed}.) 
Thus the restriction
map to $U$ (or $Y$, or $X$) makes $A$ isometric
to a quotient space of 
$$ \tilde A = \{ f\in \lip\alpha: 
f \textup{ is holomorphic on }U\}.$$
We shall find it convenient to work
with globally-defined functions in the sequel.

\subsection{Distributions}
Let $\mathcal D$ denote the space of test functions
(i.e. $C^\infty$ functions having compact support),
and let $\mathcal D'$ denote its dual, the Schwartz
distribution space.
If 
$\mu$ is any complex measure
on $X\times X$, having no mass on the diagonal,
then we may define
a distribution $T_1\in\mathcal D'$ by setting
\begin{equation}\label{E:2}
\langle\phi,T_1\rangle =
 \int_{X\times X} 
\frac{\phi(z)-\phi(w)}{|z-w|^\alpha}
\,d\mu(z,w),\ \forall \phi\in \mathcal D.
\end{equation}
This distribution $T_1$ will not, in general,
be representable by integration against 
a locally-integrable funtion
or a measure, but will extend continuously to
an element of $(\lip\alpha)^*$. It is a bit
\lq wilder' than a measure.  (It may be represented
in the form 
$$ T_1 = \nu_0 + \frac{\partial}{\partial x}\nu_1
+ \frac{\partial}{\partial y}\nu_2,$$
where the $\nu_j$ are compactly-supported measures,
but we shall not use this representation, as it 
carries less information than the fact that
$T_1$ acts continuously on $\lip\alpha$.)
We denote the extension of $T_1$
to $\lip\alpha$ by the same notation $T_1$,
and write its value as $T_1(f)$ or
$\langle f,T_1\rangle$ for any 
$f\in\lip\alpha$.

Since $\langle\phi,T_1\rangle$ is unaffected if $\phi$ is
altered away from $X$, it is clear that
$T_1$ has support in $X$. Thus we can also define
$\langle\phi,T_1\rangle$ for any function $\phi$
defined and $C^\infty$ on a neighbourhood of $X$
to be 
$\langle\tilde\phi,T_1\rangle$ where $\tilde\phi$
is any element of $\mathcal E$ (the space of
globally-defined $C^\infty$ functions) that agrees
with $\phi$ near $X$.   For instance, 
$\displaystyle \left\langle \frac1{z-a},T_1\right\rangle $
makes sense, for $a\not\in X$. Similarly, $\langle f,T_1\rangle$
makes sense whenever $f$ is defined on some neighbourhood
of $X$ and satisfies a little-lip$\alpha$ condition there. 

\subsection{Cauchy Transforms} 
The main idea behind what follows is that
although $T_1$ is wilder than a measure, 
it is still tame enough to allow
us to treat it almost as though it were
a measure.  
Specifically,
the Cauchy transform of $T_1$ (which we are about to define)
is
representable by integration against a locally-integrable 
function.  This fact was already 
noted and exploited in \cite{Annih}.

The Cauchy transform of $\phi\in\mathcal D$
is its convolution
$$ \hat\phi := \phi*\left(
\frac1{\pi z}
\right) $$
with the fundamental solution of $\displaystyle 
\frac{\partial}{\partial\bar z}$. In other words,
$$ \hat\phi(z) = \frac1\pi \int\frac{\phi(\zeta)}{z-\zeta} dm(\zeta),$$
for all $z\in\C$, where $m$ denotes area measure.
This function belongs to the space $\mathcal E$,
and satisfies
$$ \frac{\partial \hat\phi}{\partial z} = \phi. $$

For distributions $T$ having compact support, we define
$$ \langle\phi,\hat T\rangle =
-\langle\hat\phi, T\rangle, \forall\phi\in\mathcal D.$$

If $T_1$ is given by Equation \eqref{E:2}, then
consider
$$ H(a) = \frac1\pi \int
\frac{z-w}{(z-a)(w-a)|z-w|^\alpha}
\,d\mu(z,w),$$
for $a\in \C$.  This is well-defined whenever
$$ \tilde H(a) = \int
\frac{|w-z|^{1-\alpha}}{|z-a|\cdot|w-a|}
\,d|\mu|(z,w)<\infty,$$
which happens almost everywhere with respect to area measure,
and $\tilde H$ is locally-integrable,
as is seen by an application of Fubini's Theorem.
Also $|H(a)|\le \tilde H(a)$ for all such $a$.
Another Fubini calculation yields
$$ \langle\phi, \widehat{T_1}\rangle = \int_\C \phi\cdot H dm, $$
for all $\phi\in\mathcal D$.  Thus
$H$ represents $\widehat{T_1}$.  Based on this, 
we sometimes write $\widehat{T_1}(a)$ for $H(a)$.
Note that
$$ \widehat{T_1}(a) = H(a)=\left\langle\frac1{\pi(a-z)},T_1\right\rangle,$$
whenever $a\not\in X$.

Note that these facts do not depend on the relation
of $T_1$ to a derivation, but only on its
representability in the form \eqref{E:2} for some
measure $\mu$ on $X\times X$.

Note also, for future reference, that if $T$
actually represents a normalized point derivation at a point $b$ of $X$, 
then
 $\displaystyle H(a) = \frac{1}{\pi(b-a)^2}= \oh(1/|a|^2)$
as $a\to\infty$.

\section{Estimates}
\subsection{The product $g\cdot T_1$}
The dual of any Banach algebra is naturally
a module over the algebra.  In the present
situation, $\lip\alpha$ acts on $(\lip\alpha)^*$,
so given $g\in\lip\alpha$ and $T_1$ as in Equation \eqref{E:2},
we may define a new element $g\cdot T_1$ of 
$(\lip\alpha)^*$ by setting
$$ \langle\phi,g\cdot T_1\rangle = \langle g\cdot \phi,T_1\rangle,
\ \forall\phi\in\mathcal D.$$
We remark that $\langle1,g\cdot T_1\rangle = \langle g,T_1\rangle\not=0$,
in general, so we cannot represent $gT_1$ by a measure as in
Equation \eqref{E:2}.  However, writing
$$ \phi(z)g(z)-\phi(w)g(w)=
\left( \phi(z)-\phi(w)\right)\cdot g(z) 
+ \phi(w)\cdot\left( g(z)-g(w)\right),$$
a short calculation gives
$$ \langle\phi,g\cdot T_1\rangle
= \int_{X\times X}
\frac{\phi(z)-\phi(w)}{|z-w|^\alpha}
d\mu'(z,w)
+
\int_X\phi(w)\,d\lambda(w),$$
where $\mu'$ is the measure on $X\times X$ such that
$$ \mu'(E) = \int_E g(z)d\mu(z,w) $$
whenever $E\subset X\times X$ is a Borel set,
and $\lambda$ is the measure on $X$ such that
$$ \lambda(E) = \int_{E\times X}
\frac{g(z)-g(w)}{|z-w|^\alpha}d\mu(z,w)
$$
whenever $E\subset X$ is Borel, i.e. $\lambda$
is the first-coordinate marginal of the measure
$$  
\frac{g(z)-g(w)}{|z-w|^\alpha}\cdot\mu(z,w)
$$
(a {\em bounded} multiple of $\mu$).
So we may write $g\cdot T_1 = S_1+S_2$, where
$S_1\perp\C1$ is represented (as in Equation \eqref{E:2})
by the measure $\mu' = g(z)\cdot\mu$ on $X\times X$, and
$S_2$ is represented by the measure $\lambda$ on $X$.   

Denoting the total variation of a measure $\mu$ by $\|\mu\|$,
we note for future reference that
$$ \|\lambda\|\le \kappa(g)\cdot\|\mu\|.$$

Let us call $S_1$ {\em the main part of } $g\cdot T_1$
and $S_2$ {\em the residual part of } $g\cdot T_1$.

\subsection{Estimate}
We are aiming for an estimate for the growth
of the Cauchy transform of $g\cdot T_1$
as we approach a boundary point nontangentially.
The main step is an estimate for the Cauchy
transform 
$\hat{S}_1$ of the main part.

\begin{lemma}\label{L:1} Fix a measure $\mu$ on $X\times X$.
Let $b\in X$ and let
$g\in \lip\alpha$ have $g(b)=0$.
Let $S_1$ be the distribution
given by
$$ \langle\phi, S_1\rangle
=
\int_{X\times X}
\frac{\phi(z)-\phi(w)}{|z-w|^\alpha}g(z)d\mu(z,w),\ \forall\phi\in
\mathcal D.$$
Fix $t$ with $0<t<1$. Then there is a constant
$c$ that depends only on $t$ such that 
$$|\hat S_1(a)|\le \frac{c\kappa(g)\cdot\|\mu\|}{
|a-b|} $$
for all $a\in\C\sim X$ with $\dist(a,X)\ge t|a-b|$.
\end{lemma}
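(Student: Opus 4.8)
The plan is to estimate $\hat S_1(a) = \langle \frac{1}{\pi(a-z)}, S_1\rangle$ directly by writing out the action of $S_1$ on the test function $\phi(z) = \frac{1}{\pi(a-z)}$ (which is $C^\infty$ near $X$ since $a\notin X$, so the pairing is legitimate), obtaining
$$\hat S_1(a) = \frac{1}{\pi}\int_{X\times X} \frac{1}{|z-w|^\alpha}\left(\frac{1}{a-z}-\frac{1}{a-w}\right)g(z)\,d\mu(z,w) = \frac{1}{\pi}\int_{X\times X}\frac{(z-w)\,g(z)}{(a-z)(a-w)|z-w|^\alpha}\,d\mu(z,w).$$
So it suffices to bound the integrand pointwise. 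The modulus of the integrand is
$$\frac{|z-w|^{1-\alpha}\,|g(z)|}{|a-z|\,|a-w|},$$
and since $g(b)=0$ we have $|g(z)|\le \kappa(g)\,|z-b|^\alpha$. The two remaining tasks are: (i) control $|z-b|^\alpha |z-w|^{1-\alpha}$ from above, and (ii) bound $\frac{1}{|a-z|\,|a-w|}$ using the nontangentiality hypothesis $\dist(a,X)\ge t|a-b|$.

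First I would dispose of the denominator. Because $z,w\in X$ and $\dist(a,X)\ge t|a-b|$, we have $|a-z|\ge t|a-b|$ and $|a-w|\ge t|a-b|$, hence $\frac{1}{|a-z|\,|a-w|}\le \frac{1}{t^2|a-b|^2}$. That already gives one factor of $|a-b|^{-1}$; I need to extract the \emph{second} factor of $|a-b|$ from the numerator, i.e. I want $|z-b|^\alpha|z-w|^{1-\alpha}\lesssim |a-b|$ — but this is false for $z$ far from $b$, so the denominator bound must be used more cleverly for such $z$. The right dichotomy is a dyadic/size split. When $|z-b|\le \tfrac12|a-b|$: then also $|w-b|\le |w-z|+|z-b|$ and one checks $|z-w|\le |z-b|+|w-b|$... more carefully, on this region I use $|a-z|\ge |a-b|-|z-b|\ge\tfrac12|a-b|$ and, crucially, $|z-w|^{1-\alpha}\le (|z-b|+|w-b|)^{1-\alpha}$; this still doesn't obviously close unless $w$ is also near $b$. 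So the genuinely correct move is: use $|a-z|\ge \max\{t|a-b|,\ |a-b|-|z-b|\}$ and similarly for $w$, and bound $|z-b|^\alpha|z-w|^{1-\alpha}$ by splitting on whether $|z-w|\le |a-b|$ or not, and whether $|z-b|\le|a-b|$ or not, and in the regime where a quantity exceeds $|a-b|$, spend the corresponding excess factor against one of the denominators (which is at least a fixed multiple of that same quantity, up to the $t$). In every case the product telescopes to a bound $\dfrac{c(t)\,\kappa(g)}{|a-b|}$ times the measure element.

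The main obstacle is exactly this case analysis on the integrand: making sure that for \emph{all} positions of $z$ and $w$ relative to $a$ and to $b$ — near $b$, near $a$'s distance scale, or far out near $\diam X$ — the combination $\dfrac{|z-b|^\alpha|z-w|^{1-\alpha}}{|a-z|\,|a-w|}$ is $O(1/|a-b|)$ with a constant depending only on $t$ (and not on $\mu$, $g$, or $\diam X$). The key estimates are: (a) $|a-z|\ge t|a-b|$ and $|a-z|\ge \tfrac12 |z-b|$ whenever $|z-b|\ge 2|a-b|$ (since then $|a-z|\ge|z-b|-|a-b|\ge\tfrac12|z-b|$), with the analogous statements for $w$; (b) $|z-w|\le |z-b|+|w-b|$; (c) $0<1-\alpha<1$ and $0<\alpha<1$, so $(x+y)^s\le x^s+y^s$ for $s\in(0,1)$ lets me split powers of sums additively. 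Feeding (a)–(c) through the three or four regions and then integrating $d|\mu|$ over $X\times X$ produces $|\hat S_1(a)|\le c(t)\,\kappa(g)\,\|\mu\|/|a-b|$, as claimed. I expect no difficulty beyond bookkeeping once the regions are fixed; the constant $c$ will be an explicit function of $t$ like $C/t^2$.
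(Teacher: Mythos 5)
Your proposal is correct and follows essentially the same route as the paper: bound the explicit integrand $\dfrac{|z-w|^{1-\alpha}|g(z)|}{|a-z|\,|a-w|}$ pointwise, using $|g(z)|\le\kappa(g)|z-b|^\alpha$, the dichotomy $|z-b|\le 2|a-b|$ versus $|z-b|>2|a-b|$ (which in either case gives $|z-b|\le (2/t)|z-a|$ for $z\in X$), and subadditivity of $x\mapsto x^{s}$ for $0<s<1$. The paper dispenses with your multi-region bookkeeping by converting everything to distances from $a$ --- namely $|g(z)|\le c\,|z-a|^\alpha$ and $|z-w|^{1-\alpha}\le|z-a|^{1-\alpha}+|w-a|^{1-\alpha}$ --- so the integrand is bounded by $c/\dist(a,X)\le c/(t|a-b|)$ in a single step.
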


\begin{proof}
We may assume without loss in generality that $\kappa(g)=1$.
Then $|g(z)| \le |z-b|^\alpha$ for all $z\in\C$.

Assume $\dist(a,X)\ge t|a-b|$.  

Let $z\in X$. If $|z-b|\le 2|a-b|$, then 
$$ |z-b|\le \frac{2\dist(a,X)}t\le \frac{2|z-a|}t, $$
whereas if $|z-b|>2|a-b|$, then
$$ |z-b| \le |z-a|+|a-b| < |z-a|+\frac12|z-b|,$$
so $|z-b|<2|z-a|$. Thus in either case
$|z-b|\le 2|z-a|/t$. Hence
$|g(z)|\le c|z-a|^\alpha$ for all $z\in X$, where $c$ depends only on
$t$.  Henceforth we shall use $c$ to denote a
constant, which may differ at each occurrence,
depending only on $t$.
 
Let 
$$ K(z,w) = 
\frac{|w-z|^{1-\alpha}}{|z-a|\cdot|w-a|}.
$$
Then
$$|\hat S_1(a)|\le \int_{X\times X} K(z,w) |g(z)| d|\mu|(z,w).$$

We have $|z-w|^{1-\alpha}\le |z-a|^{1-\alpha}+|w-a|^{1-\alpha}$,
so for $z,w\in X$ we have
$$ K(z,w)  \le
\frac1{|z-a|\cdot|w-a|^\alpha}
+
\frac1{|z-a|^\alpha\cdot|w-a|}
,$$
so
$$ K(z,w)|g(z)|  \le
\frac c{|z-a|^{1-\alpha}\cdot|w-a|^\alpha}
+
\frac c{|w-a|}
\le \frac c{\dist(a,X)}.$$
The 
 desired result follows.
\end{proof}

\begin{lemma}\label{L:2}
If $T_1$ is given by Equation \eqref{E:2}, and $0<t<1$,
then there is a constant $c$ depending only on $t$
such that 
$$ |\widehat{g\cdot T_1}(a)| 
\le\frac{c\kappa(g)\|\mu\|}{\dist(a,X)}$$
whenever $\dist(a,X)\ge t|a-b|$.
\end{lemma}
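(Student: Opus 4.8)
The plan is to reduce the estimate to Lemma~\ref{L:1} together with the trivial bound on the Cauchy transform of a measure carried by $X$. As in Lemma~\ref{L:1}, I would work under the normalization $g(b)=0$ (this is genuinely needed: for a nonzero constant $g$ the right-hand side vanishes, while $\widehat{g\cdot T_1}=\widehat{T_1}$ in general does not). Using the decomposition of $g\cdot T_1$ established above, write $g\cdot T_1 = S_1+S_2$, where the main part $S_1$ is represented in the form \eqref{E:2} by the measure $g(z)\cdot\mu$ on $X\times X$, and the residual part $S_2$ is represented by the measure $\lambda$ on $X$ with $\|\lambda\|\le\kappa(g)\|\mu\|$. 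Both $S_1$ and $S_2$ have support in the compact set $X$, so each has a Cauchy transform and $\widehat{g\cdot T_1}=\hat S_1+\hat S_2$.

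For the main part, Lemma~\ref{L:1} applies directly, with the given measure $\mu$ and the same $g$: there is a constant $c$ depending only on $t$ with $|\hat S_1(a)|\le c\,\kappa(g)\|\mu\|/|a-b|$ whenever $\dist(a,X)\ge t|a-b|$. Since $b\in X$ we have $|a-b|\ge\dist(a,X)$, so in the required range this at once yields $|\hat S_1(a)|\le c\,\kappa(g)\|\mu\|/\dist(a,X)$.

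For the residual part, $\hat S_2$ is represented by the locally integrable function $a\mapsto\frac1\pi\int_X\frac{d\lambda(w)}{a-w}$, so for every $a\notin X$ --- with no appeal to nontangential approach --- $|\hat S_2(a)|\le\frac1\pi\int_X\frac{d|\lambda|(w)}{|a-w|}\le\|\lambda\|/(\pi\,\dist(a,X))\le\kappa(g)\|\mu\|/(\pi\,\dist(a,X))$. Adding the two bounds and absorbing the numerical factors into a single constant depending only on $t$ completes the proof.

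I do not expect a real obstacle here; the analytic content is entirely in Lemma~\ref{L:1}, and the rest is bookkeeping: recording that $\|\lambda\|\le\kappa(g)\|\mu\|$ (already noted when $g\cdot T_1$ was split), observing that the Cauchy transform splits additively because both summands are compactly supported, and replacing $|a-b|$ by the smaller quantity $\dist(a,X)$ so the two estimates share a denominator. The one conceptual point worth flagging is the role of $g(b)=0$: dropping it would force one to apply the lemma to $g-g(b)$ and to retain the term $g(b)\widehat{T_1}$, which cannot be controlled by $\dist(a,X)^{-1}$ near $X$ for general $\mu$, so the normalization $g(b)=0$ is precisely what the estimate requires.
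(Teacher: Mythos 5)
Your proof is correct and follows essentially the same route as the paper: split $g\cdot T_1$ into the main part $S_1$ (handled by Lemma~\ref{L:1}) and the residual part $S_2=\lambda$ (handled by the trivial bound $|\hat\lambda(a)|\le\|\lambda\|/(\pi\,\dist(a,X))$ with $\|\lambda\|\le\kappa(g)\|\mu\|$), then add the two estimates using $\dist(a,X)\le|a-b|$. Your remark that the normalization $g(b)=0$ is implicitly required is also apt; the paper's statement omits it but inherits it through Lemma~\ref{L:1}, and it does hold in the application.
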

\begin{proof}
Let $S_1$ and $S_2$ be the parts of $g\cdot T_1$ and
$\lambda$ represent $S_2$, as in the last section. 
Since $\lambda$ is a measure supported on $X$, we have
$$|\hat\lambda(a)|\le \frac{\|\lambda\|}{\dist(a,X)}
\le \frac{\|\mu\|\cdot\kappa(g)}{\dist(a,X)}$$
whenever $\dist(a,X)\ge t|a-b|$.
Combining this with the last lemma, we get
$$ |\widehat{g\cdot T_1}(a)| 
\le |\hat S_1(a)| + |\hat S_2(a)|
\le\frac{c\kappa(g)\|\mu\|}{\dist(a,X)},$$
as required.
\end{proof}

\subsection{Estimate for $\hat T_1$}
By a similar (slightly simpler) argument we obtain the following.
\begin{lemma}\label{L:3}
Let $T_1$ be given by Equation \eqref{E:2}. Then for each $t$ with $0<t<1$
there exists a constant $c$, depending only on $t$, such that
$$ |\hat{T_1}(a)| 
\le\frac{c\|\mu\|}{\dist(a,X)^{1+\alpha}},$$
whenever $\dist(a,X)\ge t|a-b|$.
\end{lemma}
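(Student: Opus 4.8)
The plan is to estimate $\hat T_1(a)$ directly from the integral representation
$$ \hat T_1(a) = \frac1\pi \int_{X\times X}
\frac{z-w}{(z-a)(w-a)|z-w|^\alpha}\,d\mu(z,w),$$
bounding the absolute value by $\int K(z,w)\,d|\mu|(z,w)$ with the same kernel
$K(z,w) = |w-z|^{1-\alpha}/(|z-a|\cdot|w-a|)$ that appeared in the proof of Lemma~\ref{L:1}. The point is that, without the factor $g(z)$, we have no extra decay in $z$, so the crude splitting used before only gives a bound in terms of $\dist(a,X)$, not $|a-b|$ --- but that is exactly what the statement asks for.

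First I would use $|z-w|^{1-\alpha}\le |z-a|^{1-\alpha}+|w-a|^{1-\alpha}$, exactly as in Lemma~\ref{L:1}, to get
$$ K(z,w) \le \frac1{|z-a|\cdot|w-a|^\alpha} + \frac1{|z-a|^\alpha\cdot|w-a|}.$$
Each term is symmetric to the other, so it suffices to bound $\int 1/(|z-a|^\alpha|w-a|)\,d|\mu|$. Now $1/|w-a|\le 1/\dist(a,X)$ since $w\in X$, and likewise $1/|z-a|^\alpha\le 1/\dist(a,X)^\alpha$; multiplying these two crude bounds gives $K(z,w)\le 2/\dist(a,X)^{1+\alpha}$ for all $z,w\in X$, and integrating against $|\mu|$ gives
$$ |\hat T_1(a)| \le \frac{2\|\mu\|}{\pi\,\dist(a,X)^{1+\alpha}}.$$
That already yields the claimed inequality with $c = 2/\pi$, and in fact it does not even use the nontangential hypothesis $\dist(a,X)\ge t|a-b|$. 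So the lemma as literally stated is almost immediate; the hypothesis is presumably retained only for uniformity with Lemmas~\ref{L:1} and~\ref{L:2}, and the constant can be taken independent of $t$.

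Since the argument is so short, there is really no serious obstacle here --- the only thing to be slightly careful about is the well-definedness of $\hat T_1(a)$ for a.e.\ $a$ and its identification with $H(a)$, but that was already established in the Cauchy-transform subsection above (via the Fubini argument showing $\tilde H\in L^1_{\mathrm{loc}}$ and $|H|\le\tilde H$). One should note that the bound holds for every $a\notin X$ with $\tilde H(a)<\infty$, which is a.e.\ $a$; at the (measure-zero) exceptional set $\hat T_1$ is not pointwise defined and there is nothing to prove. This is why the statement is phrased for $a\in\C\sim X$ with the understanding that $\hat T_1(a)$ means $H(a)$ where that converges.
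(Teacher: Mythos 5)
Your proof is correct and is essentially the argument the paper intends: the paper proves Lemma~\ref{L:3} only by the remark that it follows ``by a similar (slightly simpler) argument'' to Lemma~\ref{L:1}, and your estimate --- bounding $|\hat T_1(a)|$ by $\int K\,d|\mu|$, splitting $|z-w|^{1-\alpha}\le|z-a|^{1-\alpha}+|w-a|^{1-\alpha}$, and then using $|z-a|,|w-a|\ge\dist(a,X)$ --- is exactly that simplification, with the correct observation that the nontangential hypothesis is not actually needed and the constant can be taken absolute. (Your a.e.\ caveat is harmless but superfluous: for $a\notin X$ the kernel is bounded on the compact set $X\times X$, so $\tilde H(a)<\infty$ and $\hat T_1(a)=H(a)$ automatically.)
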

\section{Proof of Theorem}

Suppose $A=A_\alpha(U)$ admits a nonzero
continuous point derivation at $b$, and let
$\partial$ be the normalised derivation at that point.
Given a function $f\in A$, we use the same symbol
$f$ to denote some global extension in $\lip\alpha$.
Note that the extension is uniquely-determined on $X$,
but not outside $Y$. None of the quantities we will
consider depend on which extension is taken.

As we have seen, there is a measure $\mu$ on $X\times X$,
having no mass on the diagonal, such that
$$ \partial f = 
\int_{X\times X} 
\frac{f(z)-f(w)}{(|z-w|^\alpha}
\,d\mu(z,w),\ \forall f\in A.$$
Let $T_1$ be the distribution defined by
Equation \eqref{E:2}. Then $T_1$ has support in $X$,
and extends continuously to an element of 
$\lip\alpha^*$.

Let $\mathcal A$ denote the set of
all $f\in\lip\alpha$ that are holomorphic
on $U$ and on a neighbourhood of $b$.
As remarked earlier, $\mathcal A$ is
dense in $A$.

$\mathcal E'$ is a module over $\mathcal E$, via the 
multiplication operation $(\lambda,T)\mapsto
\lambda\cdot T$
defined by 
$$ \langle\phi,\lambda\cdot T\rangle
= \langle\lambda\phi,T\rangle $$
for all $\phi,\lambda\in\mathcal E$
and $T\in\mathcal E'$. So we may define
$T_0=(z-b)\cdot T_1$, where
$(z-b)$ denotes the function 
$z\mapsto(z-b)$.

We calculate that
for $f\in\mathcal A$ we have
$$\langle f,T_0\rangle
=
\langle(z-b)f,T_1\rangle = f(b),
$$
and hence by continuity this also holds
for all $f\in A$, i.e.
the distribution $T_0$ represents
evaluation at $b$ on $A$.
\def\dbar{\frac{\partial}{\partial\bar z}}
Next,
$$ \dbar\left((z-b)\cdot \hat T_1\right)
= (z-b)\cdot T_1 = T_0 = \dbar \hat T_0,$$
so by Weyl's Lemma (cf. \cite[p.72]{Schw} or
\cite[Theorem 4.4.1, p. 110]{HORM})
$$ \hat T_0 = (z-b)\cdot \hat T_1 + h,$$
where $h$ is an entire function.
Also, if $\phi\in\mathcal D$ vanishes on a
neighbourhood of $Y$,
then
$$ 
\langle\phi,\hat T_0\rangle
=\hat\phi(b) =
\langle\phi,\frac1{\pi(b-z)}\rangle.$$
Thus $\displaystyle
\hat T_0(z) = \frac1{\pi(b-z)}$ off $Y$.
In particular, $\hat T_0(z)$ tends to $0$
as $z\to\infty$.  Now we also have
$$ \hat T_1(z) = \Oh\left(
\frac1{|z|^2}\right) $$
as $z\to\infty$, hence $h(z)\to0$ as $z\to\infty$,
whence $h$ is identically zero, and
$$ \hat T_0 = (z-b)\hat T_1.$$

Next, define $T=-\pi(z-b)\cdot T_0=-\pi(z-b)^2\cdot T_1$. Then 
$T$ annihilates $A$, and by a similar argument to that
above we see that
$$ \hat T = -\pi(z-b)^2\hat T_1 + k$$
for some entire $k$. Now the fact that
$T$ annihilates $A$ forces $\hat T=0$ off $Y$,
so we get
$$ \hat T = 1-\pi(z-b)^2\cdot \hat T_1.$$

Now suppose $(z_n)_n\subset U$ and
$z_n\to_{\textup{nt}}b$.
By Lemma \ref{L:3}, 
\begin{equation}\label{E:T-hat-1}
\hat T(z_n) - 1 = -\pi(z_n-b)^2\cdot\hat T_1(z_n) 
= \Oh(|z_n-b|^{1-\alpha})\to 0
\end{equation}
as $n\uparrow\infty$.
In particular, $\hat T(z_n)\to1$,
so we may choose $N_1\in\N$
such that $|\hat T(z_n)|>\frac12$ for $n>N_1$.

Consider any point $a\in U$ with $\hat T(a)\not=0$, and define
$$ R_a = \frac1{\pi\hat T(a)(a-z)}\cdot T
=\frac{(z-b)^2}{\hat T(a)(z-a)}\cdot T_1,$$
i.e.
$$ \langle\phi,R_a\rangle = \frac1{\hat T(a)}\left\langle
\frac{\phi(z)}{\pi(a-z)}, T\right\rangle,\ \forall\phi\in\mathcal D.$$
This is a well-defined distribution since
$\phi(z)/(z-a)$ is $C^\infty$ near $X$, and hence near
the support of $T$. Also $R_a$ is supported on $X$,
and represents $a$ on $A$, since for $f\in\mathcal A$
we may write $f(z)=f(a)+(z-a)g(z)$ for a $g\in\mathcal A$,
and get
$$
\langle f,R_a\rangle =
\frac1{\hat T(a)}\left\langle
\frac{f(a)}{\pi(a-z)}+\frac{g(z)}{\pi}, T\right\rangle
= f(a) - \frac{\langle g,T\rangle}{\pi\hat T(a)} = f(a).$$
Thus the functional
$$ f\mapsto \frac{f(a)-f(b)}{a-b} - \partial f $$
is represented on $A$ by the distribution
$$
D_a = \frac{R_a-T_0}{a-b} - T_1.
$$ 
Hence $D_{z_n}(f) \to 0$ as $n\uparrow\infty$,
for all $f\in \mathcal A$.
To prove the theorem, we have to show that
this also holds for all $f$ in the closure $A$ of $\mathcal A$.
To do this, it suffices to show that
the functionals $D_{z_n}$ are uniformly
bounded on $A$, for  $n\ge N_1$, i.e
that
$$ |D_{z_n}(f)|  \le c\kappa(f) $$
for some constant $c>0$, for all $f\in A$ and all $n>N_1$.

Fix an arbitrary $f\in\lip\alpha$, holomorphic on $U$.
Take $g(z)=f(z)-g(b)$, so $D_a(f)=D_a(g)$, $\kappa(f)=\kappa(g)$ and 
$g(b)=0$.  

Let 
$$\displaystyle L_a = \hat T(a)R_a = \frac{(z-b)^2}{z-a}\cdot T_1 =
\frac1{\pi(a-z)}\cdot T.$$ 
Then
$D_a = E_a+F_a$, where
$$\begin{array}{rcl}
\displaystyle E_a &=& \displaystyle \frac{L_a-T_0}{a-b} - T_1,\\
F_a &=& \displaystyle \frac{R_a-L_a}{a-b}.
\end{array}
$$.

For $a\in U$, we calculate
$$ L_a-T_0 = -
\frac{T}{\pi(z-a)}+
\frac{T}{\pi(z-b)}
= \frac{b-a}{\pi(z-a)(z-b)}\cdot T,
$$
$$ \frac{L_a-T_0}{a-b} = -\frac{T}{\pi(z-a)(z-b)}=\left(\frac{z-b}{z-a}\right)\cdot T_1,
$$
$$ E_a = \left(\frac{a-b}{z-a}\right)\cdot T_1,
$$
so
$$
 E_a(g)
= (a-b)\cdot\left\langle\frac{g(z)}{z-a},T_1
\right\rangle = (a-b)\cdot \widehat{g\cdot T_1}(a).$$
Thus Lemma \ref{L:2} gives
$$| E_{z_n} g | \le |z_n-b| \cdot \frac{c\kappa(g)\cdot \|\mu\|}{\dist(z_n,X)}
\le c\kappa(f),$$

Next,
$$ R_a-L_a = \left(1-\hat T(a)\right)R_a.$$
Since $g\in A$ and $g(b)=0$, we have
$$ |R_a(g)| = | g(a) | \le \kappa(g)|b-a|^\alpha.$$
Then for $a=z_n$ with $n\ge N_1$, we get
$$ |(R_a-L_a)(g)| \le c|a-b|^{1-\alpha}\kappa(g)|a-b|^\alpha
=c|a-b|,$$
so
$$  |F_{z_n}(g)| \le c, \textup{ for }n\ge N_1.  $$
Thus $D_{z_n}(g)$ is indeed bounded, as required.
This concludes the proof.

\medskip\noindent\textbf{Note:} This article was originally published in 
Analysis and Mathematical Physics 4 (2014) 131-44. 
DOI: 10.1007/s13324-014-0074-0. \copyright Copyright 2014 Springer-Verlag. 
The present ArXiv version corrects three misprints, one on p.12 and
two on p.14.  These did not materially affect the validity of the argument,
and the result stands.  

\begin{thebibliography}{99}
\bibitem{Dol} E.P. Dolzhenko. On removal of singularities of analytic functions. Uspehi Mat. Nauk 18, no.4 (112) (1963) 135-42. English transl., AMS Translations (2) 97 (1971) 33-41.
\bibitem{Fed} H. Federer. Geometric Measure Theory. Springer. 1969.
\bibitem{HORM} L. Hormander. The Analysis of Linear Partial Differential
Operators I. Springer. 1983.
\bibitem{LO} D. Lord and A.G. O'Farrell. J. d'Analyse Math. (Jerusalem) 63 (1994) 103-19.
\bibitem{O} A.G. O'Farrell. Equiconvergence of Derivations. Pacific J. Math. 53 (1974) 539-54.
\bibitem{Annih} A.G. O'Farrell. Annihilators of Rational Modules. J. Functional Analysis 19 (1975) 373-89. 
\bibitem{Schw} L. Schwartz. Th\'eorie des distributions. 2nd ed. Hermann. Paris. 1966.
\bibitem{S} D.R. Sherbert. The structure of ideals and point derivations in
Banach algebras of Lipschitz functions. Transactions AMS 111(1964) 240-72.
\bibitem{Z} L. Zalcman. Null sets for a class of analytic functions. Amer. Math. Monthly 75 (1968) 462-70.
\end{thebibliography}
\end{document}